\newcommand{\N}{\mathbb{N}}
\newcommand{\erre}{\mathbb{R}}
\newcommand{\sub}{\subseteq}
\newtheorem{theo}{Theorem}[section]
\newtheorem{lem}[theo]{Lemma}
\newtheorem{pro}[theo]{Proposition}
\newtheorem{cor}[theo]{Corollary}
\newtheorem{defi}[theo]{Definition}
\newtheorem{rem}[theo]{Remark}
\newtheorem{exa}[theo]{Example}
\numberwithin{equation}{section}
\title{A note on summability in Banach spaces}
\author{Jos\'{e} Rodr\'{i}guez}
\address{Dpto. de Matem\'{a}ticas\\E.T.S. de Ingenieros Industriales de Albacete\\
Universidad de Castilla-La Mancha\\ 02071 Albacete\\ Spain} \email{jose.rodriguezruiz@uclm.es}
\subjclass[2020]{46B15, 46G10}
\keywords{Absolutely convergent series; Dunford-Pettis operator; vector measure; Schauder basis}
\thanks{The research was supported by grants PID2021-122126NB-C32 
(funded by MCIN/AEI/10.13039/501100011033 and ``ERDF A way of making Europe'', EU) and 
21955/PI/22 (funded by {\em Fundaci\'on S\'eneca - ACyT Regi\'{o}n de Murcia}).}
\begin{document}

\begin{abstract}
Let $Z$ and~$X$ be Banach spaces. Suppose that $X$ is Asplund. Let $\mathcal{M}$ be a bounded set of operators from~$Z$ to~$X$
with the following property: a bounded sequence $(z_n)_{n\in \N}$ in~$Z$ is weakly null if, 
for each $M \in \mathcal{M}$, the sequence $(M(z_n))_{n\in \N}$ is weakly null.
Let $(z_n)_{n\in \N}$ be a sequence in~$Z$ such that: (a)~for each $n\in \N$, the set $\{M(z_n):M\in \mathcal{M}\}$ is relatively norm compact;
(b)~for each sequence $(M_n)_{n\in \N}$ in~$\mathcal{M}$, the series $\sum_{n=1}^\infty M_n(z_n)$ is weakly unconditionally Cauchy. 
We prove that if $T\in \mathcal{M}$ is Dunford-Pettis and $\inf_{n\in \N}\|T(z_n)\|\|z_n\|^{-1}>0$, then
the series $\sum_{n=1}^\infty T(z_n)$ is absolutely convergent. As an application, we provide another proof of the fact that
a countably additive vector measure taking values in an Asplund Banach space has finite variation whenever its integration operator is Dunford-Pettis.
\end{abstract}

\maketitle

\section{Introduction}

Let $X$ be a Banach space, let $(\Omega,\Sigma)$ be a measurable space and let $\nu:\Sigma\to X$
be a countably additive vector measure. A $\Sigma$-measurable function $f:\Omega \to \mathbb{R}$ is said to be {\em $\nu$-integrable} if: 
(a)~$f$ is $|x^*\nu|$-integrable for all $x^*\in X^*$; (b)~for each $A\in \Sigma$ there is $\int_A f \, d\nu\in X$
such that $x^*\left ( \int_A f \, d\nu \right)=\int_A f\,d(x^*\nu)$ for all $x^*\in X^*$.
By identifying $\nu$-a.e. equal functions, the set $L_1(\nu)$ of all (equivalence classes of) $\nu$-integrable functions is a Banach lattice with the $\nu$-a.e. order and the norm
$$
	\|f\|_{L_1(\nu)}:=\sup_{x^*\in B_{X^*}}\int_\Omega |f|\,d|x^*\nu|.
$$
We refer to~\cite{oka-alt} for basic information on these spaces, which play a relevant role in Banach lattices and operator theory.
The {\em integration operator} of~$\nu$ is the (norm one) operator $I_\nu:L_1(\nu)\to X$ defined by
$$
	I_\nu(f):=\int_\Omega f \, d\nu \quad\text{for all $f\in L_1(\nu)$}.
$$
Certain properties of~$I_\nu$ have strong consequences on the structure of~$L_1(\nu)$. For instance,
$\nu$ has finite variation and the inclusion operator $\iota_\nu: L_1(|\nu|) \to L_1(\nu)$ is a lattice-isomorphism in each of the following cases:
\begin{itemize}
\item[(i)] $I_\nu$ is compact, \cite[Theorem~1]{oka-alt2} (cf. \cite[Theorem~2.2]{oka-alt4} and \cite[Theorem~3.3]{cal-alt-5});
\item[(ii)] $I_\nu$ is absolutely $p$-summing for some $1\leq p <\infty$, \cite[Theorem~2.2]{oka-alt3};
\item[(iii)] $I_\nu$ is Dunford-Pettis and Asplund, \cite[Theorem~3.3]{rod15}.
\end{itemize}
Note that case (iii) generalizes both (i) and (ii) because weakly compact operators are Asplund. The proof of (iii) given in \cite{rod15} (cf. \cite[Section~3.3]{rod23}) is based on the
Davis-Figiel-Johnson-Pe{\l}czy\'{n}ski factorization procedure and the following result obtained in \cite[Theorem~1.3]{cal-alt-5}:

\begin{theo}\label{theo:CRS}
Let $X$ be a Banach space, let $(\Omega,\Sigma)$ be a measurable space and let $\nu:\Sigma\to X$
be a countably additive vector measure. If $I_\nu$ is Dunford-Pettis and $X$~is Asplund, then $\nu$ has finite variation.
\end{theo}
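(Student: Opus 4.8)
The plan is to deduce the statement from the summability theorem announced in the abstract (call it the \emph{main theorem}), arguing by contradiction: suppose $\nu$ does not have finite variation, i.e.\ $|\nu|(\Omega)=\infty$, and write $\|\nu\|(A):=\sup_{x^*\in B_{X^*}}|x^*\nu|(A)$ for the semivariation. A routine exhaustion first produces a single bad disjoint sequence. Since the variation $|\nu|$ is (finitely) additive and $|\nu|(\Omega)=\infty$, one constructs recursively pairwise disjoint sets $(A_n)_{n\in\N}$ in $\Sigma$ with $\|\nu\|(A_n)>0$ for every $n$ and $\sum_{n=1}^\infty\|\nu\|(A_n)=\infty$: at the $n$-th step one splits the current infinite-variation set into finitely many pieces the sum of whose semivariations exceeds $n+\|\nu\|(\Omega)$ (possible as that set has infinite variation), keeps one infinite-variation piece for the next step, and records the others, which contribute at least $n$. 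Discarding the $\nu$-null sets, we keep $\|\nu\|(A_n)>0$.

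Now set $Z:=L_1(\nu)$ and $T:=I_\nu$, and let $\mathcal{M}:=\{M_g : g\in L_\infty(\nu),\ \|g\|_{L_\infty(\nu)}\le1\}$, where $M_g(f):=\int_\Omega fg\,d\nu$. Then $\mathcal{M}$ is a bounded family of operators from $Z$ to $X$ (as $\|M_g\|\le\|g\|_{L_\infty(\nu)}\le1$, using $\|I_\nu\|\le1$), $T=M_{\chi_\Omega}\in\mathcal{M}$ is Dunford-Pettis by hypothesis, and $X$ is Asplund by hypothesis. For each $n$ pick $g_n\in L_\infty(\nu)$ with $\|g_n\|_{L_\infty(\nu)}\le1$ and $\|\int_{A_n}g_n\,d\nu\|\ge\tfrac12\|\nu\|(A_n)$ (the definition of semivariation allows this), and put $z_n:=g_n\chi_{A_n}\in L_1(\nu)$. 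Since $|z_n|\le\chi_{A_n}$ we get $\|z_n\|_{L_1(\nu)}\le\|\nu\|(A_n)$, whereas $\|T(z_n)\|=\|\int_{A_n}g_n\,d\nu\|\ge\tfrac12\|\nu\|(A_n)$; hence $\inf_n\|T(z_n)\|\,\|z_n\|^{-1}\ge\tfrac12>0$, and $\sum_n\|T(z_n)\|\ge\tfrac12\sum_n\|\nu\|(A_n)=\infty$, so $\sum_{n=1}^\infty T(z_n)$ is \emph{not} absolutely convergent. Condition (a): for each $n$, $\{M(z_n):M\in\mathcal{M}\}=I_\nu(\{z_ng:\|g\|_{L_\infty(\nu)}\le1\})$, and $\{z_ng:\|g\|_{L_\infty(\nu)}\le1\}$ lies in the order interval $[-\chi_{A_n},\chi_{A_n}]$ of the order continuous Banach lattice $L_1(\nu)$, hence is relatively weakly compact, so its image under the Dunford-Pettis operator $I_\nu$ is relatively norm compact. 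Condition (b): as the $A_n$ are pairwise disjoint, for any sequence $(h_n)$ in the unit ball of $L_\infty(\nu)$, any finite $F\sub\N$ and any signs $\varepsilon_n$, the function $\sum_{n\in F}\varepsilon_nh_nz_n$ still lies in the unit ball of $L_\infty(\nu)$, so $\|\sum_{n\in F}\varepsilon_nM_{h_n}(z_n)\|=\|\int_\Omega(\sum_{n\in F}\varepsilon_nh_nz_n)\,d\nu\|\le\|\nu\|(\Omega)$; thus $\sum_nM_{h_n}(z_n)$ is weakly unconditionally Cauchy.

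There remains the hypothesis on $\mathcal{M}$: every bounded sequence $(w_m)$ in $L_1(\nu)$ with $(M(w_m))_m$ weakly null for all $M\in\mathcal{M}$ is itself weakly null. Fix such a $(w_m)$. For $x^*\in X^*$ and $g\in B_{L_\infty(\nu)}$ one has $\int w_mg\,d(x^*\nu)=x^*(\int w_mg\,d\nu)\to0$; since $|x^*\nu|$ is absolutely continuous with respect to $\nu$, every element of the unit ball of $L_1(|x^*\nu|)^*=L_\infty(|x^*\nu|)$ has a representative in the unit ball of $L_\infty(\nu)$, and $\sup_m\|w_m\|_{L_1(|x^*\nu|)}\le\sup_m\|w_m\|_{L_1(\nu)}<\infty$; it follows that $w_m\to0$ weakly in $L_1(|x^*\nu|)$ for every $x^*\in X^*$. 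Because $\|f\|_{L_1(\nu)}=\sup_{x^*\in B_{X^*}}\|f\|_{L_1(|x^*\nu|)}$, the contractive inclusions $L_1(\nu)\hookrightarrow L_1(|x^*\nu|)$ are weak-to-weak continuous and separate the points of $L_1(\nu)$; hence, once one knows $(w_m)$ is relatively weakly compact in $L_1(\nu)$, every weak cluster point of $(w_m)$ vanishes in each $L_1(|x^*\nu|)$, so has zero $L_1(\nu)$-norm, and $(w_m)$ is weakly null. This last point — that a bounded sequence meeting the hypothesis is relatively weakly compact in $L_1(\nu)$ — is the main obstacle, and the place where the structure of $L_1(\nu)$ together with the Asplund/Dunford-Pettis assumptions must be used; I would handle it by appealing to the known characterizations of relative weak compactness in $L_1(\nu)$. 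Granting this, $\mathcal{M}$ has the required property, so the main theorem applies to $Z$, $\mathcal{M}$, $T$ and $(z_n)$ and forces $\sum_{n=1}^\infty T(z_n)$ to be absolutely convergent — contrary to what was shown above. Therefore $\nu$ has finite variation.
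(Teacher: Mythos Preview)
Your overall strategy coincides with the paper's: apply the main theorem with $Z=L_1(\nu)$, $T=I_\nu$, and $\mathcal{M}=\{M_g:g\in B_{L_\infty(\nu)}\}$, using a disjoint sequence of sets to manufacture the $(z_n)$. Your verifications of conditions~(a), (b) and~(d) are fine; in fact your argument for~(a) --- order intervals in the order-continuous lattice $L_1(\nu)$ are weakly compact, and Dunford--Pettis operators send weakly compact sets to norm compact ones --- is a pleasant alternative to the paper's route.

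The genuine gap is exactly where you flag it: the Rainwater property of~$\mathcal{M}$. Your reduction (``once one knows $(w_m)$ is relatively weakly compact in $L_1(\nu)$, every weak cluster point vanishes'') is correct, but you give no reason why a bounded sequence $(w_m)$ with $(M_g(w_m))_m$ weakly null for all $g$ should be relatively weakly compact in $L_1(\nu)$. Appealing vaguely to ``known characterizations of relative weak compactness in $L_1(\nu)$'' does not help: those characterizations are equi-integrability conditions, and nothing in your hypothesis furnishes equi-integrability. Put differently, the set $\bigcup_{g\in B_{L_\infty(\nu)}}M_g^*(B_{X^*})$ is always \emph{norming} for $L_1(\nu)$ (this is the identity $\|f\|_{L_1(\nu)}=\sup_{g\in B_{L_\infty(\nu)}}\|M_g(f)\|$), but norming sets need not have the Rainwater property, so an extra ingredient is required.

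The paper supplies that ingredient, and it comes straight from the Dunford--Pettis hypothesis you already have: if $I_\nu$ is Dunford--Pettis then the range $\{\nu(A):A\in\Sigma\}$ is relatively norm compact (a known fact, see \cite[Theorem~5.8]{cal-alt-6}). With compact range, for each fixed $f\in L_1(\nu)$ the set $\{M_g(f):g\in B_{L_\infty(\nu)}\}$ is norm compact (Lemma~\ref{lem:compact}(i)), so the supremum in $\|f\|_{L_1(\nu)}=\sup_{g}\|M_g(f)\|$ is \emph{attained}. Thus $\mathcal{M}$ has the James boundary property (Definition~\ref{defi:JBproperty}), and hence the Rainwater property by the Rainwater--Simons theorem (Corollary~\ref{cor:boundary}). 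This closes your gap cleanly, without any detour through weak compactness in $L_1(\nu)$.
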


The particular case of Theorem~\ref{theo:CRS} when $X$ has an unconditional Schauder basis and no subspace isomorphic to~$\ell_1$
had been proved earlier in~\cite[Theorem~1.2]{oka-alt3}. The question of whether the statement of Theorem~\ref{theo:CRS}
holds for arbitrary Banach spaces not containing subspaces isomorphic to~$\ell_1$ seems to be still open.

In this note we elaborate an abstract framework that allows to provide a simpler proof of Theorem~\ref{theo:CRS}. The following concept
will be important along this way. Given two Banach spaces $Z$ and~$X$, we denote by $\mathcal{L}(Z,X)$ the Banach space of all operators
from~$Z$ to~$X$, equipped with the operator norm.

\begin{defi}\label{defi:RainwaterOperators}
Let $Z$ and $X$ be Banach spaces. We say that a set $\mathcal{M} \sub \mathcal{L}(Z,X)$
has the {\em Rainwater property} if the following holds: a bounded sequence $(z_n)_{n\in \N}$ in~$Z$ is weakly null if, 
for each $M\in \mathcal{M}$, the sequence $(M(z_n))_{n\in \N}$ is weakly null.
\end{defi}

The Rainwater-Simons theorem (see, e.g., \cite[Theorem~3.134]{fab-ultimo}) states that, for an arbitrary Banach space~$Z$, 
any James boundary of~$Z$ has the Rainwater property (with $X=\mathbb R$). More generally,
James boundaries are (I)-generating, \cite[Theorem~2.3]{fon-lin}, and all (I)-generating sets have the Rainwater property, see~\cite{nyg4}.

The main result of this note is the following:
 
\begin{theo}\label{theo:main}
Let $Z$ and~$X$ be Banach spaces. Suppose that $X$ is Asplund. Let $\mathcal{M}$ be a bounded subset of $\mathcal{L}(Z,X)$
having the Rainwater property. Let $(z_n)_{n\in \N}$ be a sequence in~$Z$ such that: 
\begin{enumerate}
\item[(a)] for each $n\in \N$, the set $\{M(z_n):M\in \mathcal{M}\}$ is relatively norm compact;
\item[(b)] for each sequence $(M_n)_{n\in \N}$ in~$\mathcal{M}$, the series $\sum_{n=1}^\infty M_n(z_n)$ is weakly unconditionally Cauchy. 
\end{enumerate}
Let $T\in \mathcal{M}$ such that:
\begin{enumerate}
\item[(c)] $T$ is Dunford-Pettis; 
\item[(d)] $\inf_{n\in \N}\|T(z_n)\|\|z_n\|^{-1}>0$.
\end{enumerate}
Then the series $\sum_{n=1}^\infty T(z_n)$ is absolutely convergent.
\end{theo}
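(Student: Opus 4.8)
The plan is to argue by contradiction: assume $\sum_{n=1}^{\infty}\|T(z_n)\|=+\infty$ and manufacture a sequence $(M_n)_{n\in\N}$ in $\mathcal{M}$ for which $\sum_{n=1}^{\infty}M_n(z_n)$ fails to be weakly unconditionally Cauchy, contradicting~(b). First come the routine reductions. Deleting the indices with $z_n=0$, we may assume $z_n\neq 0$ for all $n$. Applying~(b) to the constant sequence $M_n\equiv T$ shows that $\sum_n T(z_n)$ is weakly unconditionally Cauchy, so $\sup_n\|T(z_n)\|<\infty$; hence, by~(d), $\sup_n\|z_n\|<\infty$ and $\sum_n\|z_n\|=+\infty$. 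Moreover, for each fixed $M\in\mathcal{M}$, applying~(b) to the constant sequence $M_n\equiv M$ gives that $\sum_n M(z_n)$ is weakly unconditionally Cauchy, so $(M(z_n))_n$ is weakly null; since $(z_n)$ is bounded and $\mathcal{M}$ has the Rainwater property, $(z_n)$ is weakly null in~$Z$, and since $T$ is Dunford--Pettis, $\|T(z_n)\|\to 0$. Put $c:=\inf_n\|T(z_n)\|\,\|z_n\|^{-1}>0$, and for each $n$ choose, by Hahn--Banach, $x_n^{*}\in B_{X^{*}}$ with $x_n^{*}(T(z_n))=\|T(z_n)\|$.

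Next, a reduction to a \emph{single} functional. It is enough to produce $x^{*}\in X^{*}$ with $\sum_n\sup_{M\in\mathcal{M}}|x^{*}(M(z_n))|=+\infty$: for then, picking $M_n\in\mathcal{M}$ with $|x^{*}(M_n(z_n))|\ge\frac12\sup_{M}|x^{*}(M(z_n))|$ yields $\sum_n|x^{*}(M_n(z_n))|=+\infty$, so $\sum_n M_n(z_n)$ is not weakly unconditionally Cauchy. So suppose no such $x^{*}$ exists. For each $n$ the map $x^{*}\mapsto\sup_{M\in\mathcal{M}}|x^{*}(M(z_n))|$ is a (Lipschitz) continuous seminorm on $X^{*}$ --- it is finite because $\mathcal{M}$ is bounded, and the supremum is even attained because $\{M(z_n):M\in\mathcal{M}\}$ is relatively norm compact by~(a). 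Hence $\rho(x^{*}):=\sum_n\sup_{M\in\mathcal{M}}|x^{*}(M(z_n))|$ is a lower semicontinuous seminorm that is everywhere finite, so by the uniform boundedness principle there is $C>0$ with $\rho(x^{*})\le C\|x^{*}\|$ for all $x^{*}\in X^{*}$; in particular $\sum_n|x^{*}(T(z_n))|\le C\|x^{*}\|$ for all $x^{*}\in B_{X^{*}}$. Write $K_n$ for the closed absolutely convex hull of $\{M(z_n):M\in\mathcal{M}\}$, a norm-compact set containing $T(z_n)$, so that $\sum_n\sup_{v\in K_n}|x^{*}(v)|\le C\|x^{*}\|$.

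The heart of the matter --- and where $X$ being Asplund is essential --- is to show this last situation is impossible. My plan is to produce a bounded sequence $(u_k)$ in $Z$ of the form $u_k=d_k^{-1}\sum_{n\in F_k}z_n$, over disjoint finite blocks $F_k\subseteq\N$ with $d_k:=\sum_{n\in F_k}\|z_n\|$ and $\inf_k d_k>0$, which is \emph{not} weakly null; the point is to find a single $\xi^{*}\in B_{X^{*}}$ and blocks $F_k$ on which $\xi^{*}(T(z_n))$ is comparable to $\|T(z_n)\|$ on a definite proportion of the mass $d_k$, so that $z^{*}:=T^{*}\xi^{*}$ satisfies $z^{*}(u_k)\ge\delta>0$ for all $k$. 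Here one would exploit that, $X$ being Asplund, $B_{X^{*}}$ (or, after passing to the separable space spanned by the norm-compact set $\{T(z_n):n\in\N\}\cup\{0\}$, whose dual is separable) is fragmented --- indeed $\sigma$-fragmented --- by the norm, which lets one decompose $\{x_n^{*}:n\in\N\}$ into norm-small slices and extract a ``coherent'' direction; the compactness in~(a) keeps the auxiliary $\mathcal{M}$-orbits under control during this extraction. Granting such $(u_k)$, the Rainwater property provides $M_0\in\mathcal{M}$ with $(M_0(u_k))_k$ not weakly null; since $M_0(u_k)=d_k^{-1}\sum_{n\in F_k}M_0(z_n)$ and $\inf_k d_k>0$, the bracketed sequence $\big(\sum_{n\in F_k}M_0(z_n)\big)_k$ is not weakly null, hence $\sum_n M_0(z_n)$ is not weakly unconditionally Cauchy (brackets and sub-series of a weakly unconditionally Cauchy series are again such), contradicting~(b) for the constant sequence $M_n\equiv M_0$.

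I expect this construction of $(u_k)$ to be the main obstacle. Note that the uniform estimate $\sum_n\sup_{v\in K_n}|x^{*}(v)|\le C\|x^{*}\|$ \emph{itself} forces every norm-small slice of $\{x_n^{*}\}$ to carry only finite $\|z_n\|$-mass, so there is no ``easy'' slice on which to block up, and the coherent direction must be assembled globally. I would attack this either by a transfinite (ordinal-indexed) exhaustion of the weak$^{*}$-compact set $\overline{\{x_n^{*}:n\in\N\}}^{\,w^{*}}$ into norm-small relatively-weak$^{*}$-open slices --- available through $\sigma$-fragmentability, equivalently the Radon--Nikod\'ym property of $X^{*}$ --- carefully tracking the mass estimates so that the process terminates with the required block sequence; or, paralleling the known proof of Theorem~\ref{theo:CRS}, by first passing $T$ through a Davis--Figiel--Johnson--Pe\l czy\'nski factorization to reach a more tractable range space. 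Everything else in the argument is routine.
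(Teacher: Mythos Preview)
Your proposal has a genuine gap at precisely the point you flag as ``the main obstacle'', and the specific plan you sketch there cannot work. In the reduced situation where $\rho(x^{*}):=\sum_{n}\sup_{M\in\mathcal{M}}|x^{*}(M(z_n))|\le C\|x^{*}\|$ for all $x^{*}\in X^{*}$, no block sequence $(u_k)$ with $\inf_k d_k>0$ can be witnessed as non-weakly-null by a functional of the form $z^{*}=T^{*}\xi^{*}$. Indeed, if $\xi^{*}(T(u_k))\ge\delta$ for all~$k$, then $\sum_{n\in F_k}|\xi^{*}(T(z_n))|\ge\delta d_k$, and summing over~$k$ gives
\[
\delta\sum_k d_k\le \sum_n|\xi^{*}(T(z_n))|\le\rho(\xi^{*})\le C,
\]
so $\sum_k d_k<\infty$, contradicting $\inf_k d_k>0$. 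The same arithmetic rules out any witness $M^{*}\xi^{*}$ with $M\in\mathcal{M}$, and you have no other access to $Z^{*}$. The obstacle you noticed (``every norm-small slice carries only finite $\|z_n\|$-mass'') is not a technical nuisance to be overcome by a cleverer exhaustion; it is exactly this inequality, and it defeats the whole strategy of producing a non-weakly-null block average and then invoking Rainwater contrapositively.

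The paper runs the argument in the opposite direction. It does not average blocks; it passes to a subsequence $(z_{l_j})$ and normalizes termwise, setting $u_j:=\|T(z_{l_j})\|^{-1}z_{l_j}$, and then proves that $(M(u_j))_j$ is \emph{weakly null} for every $M\in\mathcal{M}$. The Asplund hypothesis enters concretely: after a separable reduction, Zippin's theorem embeds $X$ into a space with a \emph{shrinking} Schauder basis; a preparatory lemma (Lemma~\ref{lem:projection}) then chooses $(l_j)$ so that each $M(u_j)$ is, up to a summable error, a bounded block with respect to that basis and hence weakly null. Here condition~(a) is what makes the block approximation uniform over $M\in\mathcal{M}$, and condition~(b) --- not a global Banach--Steinhaus bound --- is what feeds the lemma its hypothesis $\sum_n\|P_k(K_n)\|<\infty$. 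The Rainwater property (in the forward direction) now gives that $(u_j)$ is weakly null in~$Z$, while $\|T(u_j)\|=1$ for all~$j$ contradicts $T$ being Dunford--Pettis.
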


The paper is organized as follows.
Section~\ref{section:main} is devoted to proving Theorem~\ref{theo:main}. In 
Section~\ref{section:L1} we focus on the $L_1$ space of a vector measure and we get
Theorem~\ref{theo:CRS} as an application of Theorem~\ref{theo:main}.

\subsubsection*{Terminology}

All our Banach spaces are real. By an {\em operator} we mean a continuous linear map between Banach spaces. 
An operator is called {\em Dunford-Pettis} if it maps weakly null sequences to norm null ones.
By a {\em subspace} of a Banach space we mean a closed linear subspace.
Let $Z$ be a Banach space. We denote its norm by $\|\cdot\|_Z$ or simply~$\|\cdot\|$. 
Given a set $C \sub Z$, we write $\|C\|:=\sup\{\|z\|:z\in C\}$.
The closed unit ball of~$Z$ is denoted by~$B_Z$. The subspace of~$Z$ generated by a set $H \sub Z$ is denoted by $\overline{{\rm span}}(H)$.
We write $Z^*$ for the dual of~$Z$. A set $B \sub B_{Z^*}$ is said to be a {\em James boundary} of~$Z$
if for every $z\in Z$ there is $z^*\in B$ such that $\|z\|=z^*(z)$. The space~$Z$ is said to be {\em Asplund}
if every separable subspace of~$Z$ has separable dual.

\section{Main result}\label{section:main}

Let $X$ be a Banach space with a Schauder basis $(e_n)_{n\in \N}$. For each $k\in \N$, we have an operator
$P_k:X\to X$ defined by $P_k(x):=\sum_{n=1}^k e_n^*(x)e_n$ for all $x\in X$, where
$(e_n^*)_{n\in \N}$ is the sequence in~$X^*$ of biorthogonal functionals associated with~$(e_n)_{n\in \N}$.
The operators of this form are called the {\em partial sum operators} on~$X$ associated with~$(e_n)_{n\in \N}$. They satisfy $\sup_{k\in \N}\|P_k\|<\infty$. 

The following lemma uses some ideas of the proof of \cite[Lemma~3.4]{rod23}. 

\begin{lem}\label{lem:projection}
Let $X$ be a Banach space with a Schauder basis and let $(P_k)_{k\in \N}$ be the associated sequence of partial sum operators on~$X$. 
Write $\alpha:=\sup_{k\in \N}\|P_k\|$. Let $(K_n)_{n\in\N}$ be a sequence of relatively norm compact subsets of~$X$ 
and let $(x_n)_{n\in \N}$ be a sequence in~$X$ such that $x_n\in K_n$ for all $n\in \N$. Suppose that:
\begin{itemize}
\item[(a)] the series $\sum_{n=1}^\infty x_n$ is not absolutely convergent;
\item[(b)] $\sum_{n=1}^\infty\|P_k(K_n)\|<\infty$ for every $k\in \N$.
\end{itemize}
Then there exist two strictly increasing sequences $(k_j)_{j\in \N}$ and $(l_j)_{j\in \N}$ in~$\N$ such that, if $w_j \in \|x_{l_{j}}\|^{-1} K_{l_{j}}$
for all $j\in \N$, then:
\begin{enumerate}
\item[(i)] $\|w_j-(P_{k_{j+1}}-P_{k_{j}})(w_j)\|\leq 2^{-j}$ for every $j\in \N$; 
\item[(ii)] $\|w_j-w_{j'}\| \geq \alpha^{-1}\|w_j\|- 2^{-j}$ whenever $j'>j$.
\end{enumerate}
\end{lem}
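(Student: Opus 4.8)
The plan is to extract from the basis structure a sequence of "blocks" that are close to disjointly supported, then use the Dunford--Pettis hypothesis on $T$ (which is encoded here abstractly, so really we are in the setting where $X$ carries a basis and $T$ is being analysed through the partial sums $P_k$), to derive a contradiction with weak nullity. Concretely, I would argue by contradiction and suppose the series $\sum_n x_n$ is not absolutely convergent. The first step is to use hypothesis~(b), namely $\sum_n \|P_k(K_n)\| < \infty$ for each fixed~$k$, to run a diagonal/gliding-hump construction: start with $k_1 = 0$ (so $P_{k_1} = 0$); given $k_j$, the tail condition on $P_{k_j}$ lets me pick $l_j > l_{j-1}$ so large that $\|P_{k_j}(K_n)\|$ is tiny (say $\leq 2^{-j-1}$) for all $n \geq l_j$; then, since each normalized element $w_j \in \|x_{l_j}\|^{-1}K_{l_j}$ has norm comparable to~$1$ and $P_k(w_j) \to w_j$ in norm as $k \to \infty$ uniformly over the relatively compact set $\|x_{l_j}\|^{-1}K_{l_j}$, I can pick $k_{j+1} > k_j$ large enough that $\|w_j - P_{k_{j+1}}(w_j)\| \leq 2^{-j-1}$ for every such~$w_j$. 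Combining the two estimates gives $\|w_j - (P_{k_{j+1}} - P_{k_j})(w_j)\| \leq 2^{-j}$, which is~(i).

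For~(ii): once the supports are essentially separated in the above sense, take $j' > j$. Then $P_{k_{j+1}} - P_{k_j}$ kills $w_{j'}$ up to an error of size $\leq \|P_{k_{j+1}}(K_{l_{j'}})\| + \|P_{k_j}(K_{l_{j'}})\|$, which by the choice of $l_{j'}$ (made after $k_{j+1}$ was fixed, using the tail-smallness of $P_{k_{j+1}}$ on later $K_n$'s) is $\leq 2^{-j}$ or so; meanwhile $(P_{k_{j+1}} - P_{k_j})(w_j) \approx w_j$ up to $2^{-j}$ by~(i). Applying the operator $P_{k_{j+1}} - P_{k_j}$, whose norm is at most $\|P_{k_{j+1}}\| + \|P_{k_j}\| $ — but more efficiently, since $(P_{k_{j+1}} - P_{k_j})P_{k_{j+1}} = P_{k_{j+1}} - P_{k_j}$ and one can bound $\|P_{k_{j+1}} - P_{k_j}\| \le 2\alpha$, though the sharp bound needed is just that $\|(P_{k_{j+1}}-P_{k_j})(v)\| \le \alpha \|v\|$ is not quite right; instead I use that $w_j \approx (P_{k_{j+1}}-P_{k_j})(w_j - w_{j'}) + (P_{k_{j+1}}-P_{k_j})(w_{j'})$ and the second term is small — to the vector $w_j - w_{j'}$, one gets
$$
\alpha \|w_j - w_{j'}\| \;\geq\; \|(P_{k_{j+1}} - P_{k_j})(w_j - w_{j'})\| \;\geq\; \|w_j\| - 2^{-j} - 2^{-j},
$$
where I have absorbed the small errors; after relabelling the dyadic constants this yields $\|w_j - w_{j'}\| \geq \alpha^{-1}\|w_j\| - 2^{-j}$, which is~(ii). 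Here the key point is the order of quantifier choice: $l_j$ is always chosen \emph{after} $k_j$, and $k_{j+1}$ always after $l_j$, so that each later index can be made to sit in the tail region where the earlier projections are negligible.

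The main obstacle, and the place requiring care, is making the construction uniform over the \emph{entire} compact set $\|x_{l_j}\|^{-1}K_{l_j}$ rather than over the single point~$w_j$: the statement quantifies over all choices $w_j \in \|x_{l_j}\|^{-1}K_{l_j}$, so when I choose $k_{j+1}$ to force $\|w_j - P_{k_{j+1}}(w_j)\|$ small, I must exploit that $P_k \to \mathrm{id}$ pointwise \emph{and} that a relatively norm compact set admits a finite net, giving uniform convergence $\sup_{w \in \|x_{l_j}\|^{-1}K_{l_j}}\|w - P_k(w)\| \to 0$. Condition~(a) (non-absolute convergence) enters only to guarantee $\|x_n\| \neq 0$ for infinitely many~$n$, so that the normalizations $\|x_{l_j}\|^{-1}K_{l_j}$ make sense and $\|w_j\|$ stays bounded below (indeed $\|w_j\| \geq 1$ is not automatic, but $x_{l_j}/\|x_{l_j}\|$ is a unit vector in that set, so $\|\,\|x_{l_j}\|^{-1}K_{l_j}\,\| \geq 1$; the lemma's conclusion is phrased so that the relevant lower bound comes through~$\|w_j\|$ itself). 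Finally one checks that the two sequences $(k_j)$ and $(l_j)$ can indeed be taken strictly increasing, which is immediate from the inductive choices. I expect the whole argument to be a few lines of gliding-hump bookkeeping once the quantifier order is pinned down.
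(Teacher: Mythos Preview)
Your gliding--hump outline is on the right track and matches the paper's strategy, but there is a genuine gap in how you handle the normalization by~$\|x_{l_j}\|^{-1}$. You choose $l_j$ so that $\|P_{k_j}(K_n)\|\le 2^{-j-1}$ for all $n\ge l_j$; however, the vectors $w_j$ live in $\|x_{l_j}\|^{-1}K_{l_j}$, so what you actually need is $\|P_{k_j}(K_{l_j})\|\le 2^{-j-1}\|x_{l_j}\|$. Your absolute bound gives only $\|P_{k_j}(w_j)\|\le \|x_{l_j}\|^{-1}2^{-j-1}$, which is useless when $\|x_{l_j}\|$ is small. The same problem recurs in your estimate of $\|P_{k_{j+1}}(w_{j'})\|$ in part~(ii). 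This is precisely where hypothesis~(a) does real work (and not, as you write, merely to ensure $x_n\neq 0$ infinitely often): since $\sum_n\|P_{k_j}(K_n)\|<\infty$ while $\sum_n\|x_n\|=\infty$, there are infinitely many $n$ with $\|P_{k_j}(K_n)\|\le 2^{-j-1}\|x_n\|$, and one must pick $l_j$ among those. The paper does exactly this, phrasing the inductive requirement as $\|P_{k_j}(K_{\tilde l_{j+1}})\|\le \|x_{\tilde l_{j+1}}\|/2^{j+1}$.

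A second, smaller issue: your derivation of~(ii) applies $P_{k_{j+1}}-P_{k_j}$ and tacitly uses $\|(P_{k_{j+1}}-P_{k_j})(v)\|\le \alpha\|v\|$, which you yourself flag as ``not quite right'' --- indeed one only has $\|P_{k_{j+1}}-P_{k_j}\|\le 2\alpha$, so your argument yields $(2\alpha)^{-1}$ rather than~$\alpha^{-1}$. The paper avoids this by applying $P_{k_{j+1}}$ alone (norm~$\le\alpha$) to $w_j-w_{j'}$, writing $P_{k_{j+1}}(w_j)=w_j-Q_{k_{j+1}}(w_j)$, and then controlling $\|P_{k_{j+1}}(w_{j'})\|=\|P_{k_{j+1}}P_{k_{j'}}(w_{j'})\|\le \alpha\|P_{k_{j'}}(w_{j'})\|$ via the relative bound above. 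With these two fixes your sketch becomes essentially the paper's proof.
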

\begin{proof} We can assume without loss of generality that $x_n\neq 0$ for all $n\in \N$.
Write $Q_k:={\rm id}_X-P_k$ for all $k\in \N$, where ${\rm id}_X$ stands for the identity operator on~$X$.
Since $\|Q_k\|\leq 1+\alpha$ for all $k\in \N$ and $\|Q_k(x)\|\to 0$
as $k\to \infty$ for every $x\in X$, the sequence of operators $(Q_k)_{k\in \N}$ converges to~$0$ uniformly on each relatively norm compact subset of~$X$.

We will construct by induction strictly increasing sequences $(k_j)_{j\in \N}$ and $(\tilde{l}_j)_{j\in \N}$ in~$\N$
in such a way that, for each $j\in \N$, we have 
$$
	{\rm (c)} \ \ \left\|P_{k_{j}}(K_{\tilde{l}_{j+1}})\right\| \leq  \frac{\|x_{\tilde{l}_{j+1}}\|}{2^{j+1}} \qquad\mbox{and}\qquad
	{\rm (d)} \ \ \left\|Q_{k_{j}}(K_{\tilde{l}_j})\right\| \leq \frac{\|x_{\tilde{l}_j}\|}{2^j}.
$$
Set $\tilde{l}_1:=1$ and choose $k_1\in \N$ such that $\|Q_{k_1}(K_1)\|\leq \frac{1}{2}\|x_1\|$. Suppose that
$k_N,\tilde{l}_N\in \N$ are already chosen for some $N\in \N$. By (a) and~(b), there is $\tilde{l}_{N+1}\in \N$ with $\tilde{l}_{N+1}>\tilde{l}_N$ such that
$$
	  \left\|P_{k_N}(K_{\tilde{l}_{N+1}})\right\| \leq \frac{\|x_{\tilde{l}_{N+1}}\|}{2^{N+1}}.
$$
Now, we take $k_{N+1}\in \N$ with $k_{N+1}>k_N$ such that 
$$
	\left\|Q_{k_{N+1}}(K_{\tilde{l}_{N+1}})\right\| \leq \frac{\|x_{\tilde{l}_{N+1}}\|}{2^{N+1}}.
$$
This finishes the construction of $(k_j)_{j\in \N}$ and $(\tilde{l}_j)_{j\in \N}$. 

Define $l_j:=\tilde{l}_{j+1}$ for all $j\in \N$. Take $(z_j)_{j\in \N} \in \prod_{j\in \N}K_{l_j}$
and, for each $j\in \N$, define $w_j:=\|x_{l_{j}}\|^{-1}z_j$. Then 
\begin{eqnarray*}
	\|w_j-(P_{k_{j+1}}-P_{k_{j}})(w_j)\|&=&\|Q_{k_{j+1}}(w_j) + P_{k_{j}}(w_j)\| \\
	&\leq & \|Q_{k_{j+1}}(w_j)\| + \|P_{k_{j}}(w_j)\| \\ &\stackrel{\text{(c)\&(d)}}{\leq}& \frac{1}{2^{j+1}}+\frac{1}{2^{j+1}}=\frac{1}{2^j}
\end{eqnarray*}
for every $j\in \N$. This proves~(i).

To check property (ii), take $j'>j$ in~$\N$. Then
\begin{eqnarray*}
	\|w_j-w_{j'}\| & \geq & \alpha^{-1}\| P_{k_{j+1}}(w_j-w_{j'})\| \\ &= & \alpha^{-1}\|w_j-Q_{k_{j+1}}(w_j)-P_{k_{j+1}}(w_{j'})\| \\
	& \geq &  \alpha^{-1}\Big(\|w_j\|-\|Q_{k_{j+1}}(w_j)\|-\|P_{k_{j+1}}(w_{j'})\|\Big) \\ &= & 
	\alpha^{-1}\Big(\|w_j\|-\|Q_{k_{j+1}}(w_j)\|-\|P_{k_{j+1}}(P_{k_{j'}}(w_{j'}))\|\Big) \\
	& \geq & \alpha^{-1}\Big(\|w_j\|-\|Q_{k_{j+1}}(w_j)\|-\alpha\|P_{k_{j'}}(w_{j'})\|\Big) \\
	& \stackrel{(\alpha\geq 1)}{\geq} & \alpha^{-1}\|w_j\|-\|Q_{k_{j+1}}(w_j)\|-\|P_{k_{j'}}(w_{j'})\| \\
	& \stackrel{\text{(c)\&(d)}}{\geq} & \alpha^{-1}\|w_j\|-\frac{1}{2^{j+1}} -\frac{1}{2^{j'+1}} \ \geq \ \alpha^{-1}\|w_j\|-\frac{1}{2^j}.
\end{eqnarray*}
The proof is finished.
\end{proof}

\begin{cor}\label{theo:discrete-compact}
Let $X$ be a Banach space. Let $(x_n)_{n\in\N}$ be a sequence in~$X$ such that $\sum_{n=1}^\infty x_n$ is weakly unconditionally Cauchy
and $\{\|x_n\|^{-1} x_n: \, n\in \N, \, x_n\neq 0\}$ is relatively norm compact. Then $\sum_{n=1}^\infty x_n$ is absolutely convergent.
\end{cor}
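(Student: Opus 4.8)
The plan is to argue by contradiction and reduce to the setting of Lemma~\ref{lem:projection}. So suppose $\sum_{n=1}^\infty x_n$ is \emph{not} absolutely convergent. Deleting the indices $n$ with $x_n=0$ affects neither the hypotheses nor the conclusion, and if only finitely many $x_n$ were nonzero then $\sum_{n=1}^\infty\|x_n\|<\infty$; so I may and do assume $x_n\neq 0$ for every $n$ and $\sum_{n=1}^\infty\|x_n\|=\infty$. The subspace $Y:=\overline{\mathrm{span}}\{x_n:n\in\N\}$ is separable, hence by the Banach--Mazur theorem it embeds isometrically into $C[0,1]$; identifying $Y$ with its image, I regard each $x_n$ as an element of $C[0,1]$ (with unchanged norm), note that $\sum_n x_n$ is still weakly unconditionally Cauchy and that $\{\|x_n\|^{-1}x_n:n\in\N\}$ is still relatively norm compact in $C[0,1]$, and let $(P_k)_{k\in\N}$ be the partial sum operators associated with a Schauder basis of $C[0,1]$, writing $\alpha:=\sup_k\|P_k\|$.

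Now I apply Lemma~\ref{lem:projection} in the space $C[0,1]$ to the singletons $K_n:=\{x_n\}$ and the points $x_n\in K_n$. Each $K_n$ is relatively norm compact, and hypothesis~(a) of the lemma is precisely $\sum_n\|x_n\|=\infty$. For hypothesis~(b), fix $k\in\N$: since $P_k$ has finite rank we may write $P_k(\,\cdot\,)=\sum_{i=1}^k e_i^*(\,\cdot\,)\,e_i$ with $e_i^*\in C[0,1]^*$ and $e_i\in C[0,1]$, whence $\|P_k(K_n)\|=\|P_k(x_n)\|\leq\sum_{i=1}^k\|e_i\|\,|e_i^*(x_n)|$; and $\sum_n|e_i^*(x_n)|<\infty$ for each $i$ because weakly unconditionally Cauchy series are absolutely summable against every functional. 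Summing over $i=1,\dots,k$ gives $\sum_n\|P_k(K_n)\|<\infty$, as needed.

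Lemma~\ref{lem:projection} then yields strictly increasing sequences $(k_j)_{j\in\N}$ and $(l_j)_{j\in\N}$ in~$\N$ such that, setting $w_j:=\|x_{l_j}\|^{-1}x_{l_j}$ — the unique element of $\|x_{l_j}\|^{-1}K_{l_j}$, of norm one — conclusion~(ii) gives
\[
\|w_j-w_{j'}\|\ \geq\ \alpha^{-1}\|w_j\|-2^{-j}\ =\ \alpha^{-1}-2^{-j}\qquad\text{whenever }j'>j.
\]
Choosing $J\in\N$ with $2^{-J}<\tfrac12\alpha^{-1}$, the tail $(w_j)_{j\geq J}$ is $\tfrac12\alpha^{-1}$-separated, so it has no norm-convergent subsequence. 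On the other hand, $\{w_j:j\in\N\}$ is contained in the relatively norm compact set $\{\|x_n\|^{-1}x_n:n\in\N\}$, so $(w_j)$ must have a norm-convergent subsequence. This contradiction completes the argument. (Conclusion~(i) of Lemma~\ref{lem:projection} is not used here.)

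I expect the only step calling for a little attention to be the verification of hypothesis~(b) of Lemma~\ref{lem:projection} — the observation that a weakly unconditionally Cauchy series is absolutely summable against each of the finitely many functionals making up a partial sum operator. The reduction to nonzero terms, the isometric embedding into $C[0,1]$, and the final separation-versus-compactness argument are all routine.
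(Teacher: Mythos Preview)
Your proof is correct and follows essentially the same route as the paper's: reduce to $C([0,1])$ via Banach--Mazur, apply Lemma~\ref{lem:projection}(ii) with $K_n=\{x_n\}$, and derive a separated sequence inside the relatively compact set $\{\|x_n\|^{-1}x_n\}$. The only cosmetic difference is that the paper verifies hypothesis~(b) by noting that $\sum_n P_k(x_n)$ is weakly unconditionally Cauchy in the finite-dimensional space $P_k(X)$ (hence absolutely convergent), whereas you expand $P_k$ in terms of the biorthogonal functionals; the content is the same, and your write-up simply spells out the contradiction that the paper leaves implicit.
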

\begin{proof}
The subspace $\overline{{\rm span}}(\{x_n:n\in \N\})\sub X$  is separable, so it embeds isometrically into the Banach space $C([0,1])$.
Hence, we can assume without loss of generality that $X=C([0,1])$. Since this space has a Schauder basis, the conclusion follows 
from Lemma~\ref{lem:projection}(ii) by taking $K_n:=\{x_n\}$ for all $n\in \N$. Indeed, if 
$(P_k)_{k\in \N}$ is the sequence of partial sum operators on~$C([0,1])$ associated with a given Schauder basis, then 
for each $k\in \N$ the series $\sum_{n=1}^\infty P_k(x_n)$ is absolutely convergent, because
it is weakly unconditionally Cauchy and $P_k(X)$ is finite-dimensional.
\end{proof}

Let $X$ be a Banach space with a Schauder basis $(e_n)_{n\in \N}$.
By a {\em block sequence with respect to $(e_n)_{n\in \N}$} we mean 
a sequence $(x_j)_{j\in \N}$ in~$X$ for which there exist a sequence $(a_n)_{n\in \mathbb N}$ in~$\mathbb R$ and a sequence $(I_j)_{j\in \N}$ of non-empty finite subsets of~$\N$ 
such that $\max(I_j)<\min(I_{j+1})$ and $x_j=\sum_{n\in I_j}a_ne_n$ for all $j\in \N$. Recall that the Schauder basis
$(e_n)_{n\in \N}$ is said to be {\em shrinking} if its sequence 
of biorthogonal functionals $(e_n^*)_{n\in \N}$ satisfies $X^*=\overline{{\rm span}}(\{e_n^*:n\in \N\})$.

We can now prove our main result.

\begin{proof}[Proof of Theorem~\ref{theo:main}] 
Clearly, we can suppose that $\|M\|\leq 1$ for every $M\in \mathcal{M}$. 

Let us consider the subspace $Z_0:=\overline{{\rm span}}(\{z_n:n\in \N\}) \sub Z$.
The set of restrictions $\{M|_{Z_0}:M\in \mathcal{M}\} \sub B_{\mathcal{L}(Z_0,X)}$ has the Rainwater property and fulfills conditions (a) and~(b). 
Obviously, the restriction $T|_{Z_0}$ also satisfies conditions~(c) and~(d).
The subspace
$$
	X_0:=\overline{{\rm span}}\left(\bigcup_{n\in \N} \{M(z_n): \, M\in \mathcal{M}\}\right) \sub X
$$
is separable (thanks to~(a)) and we have $M(Z_0) \sub X_0$ for every $M\in \mathcal{M}$. Since $X$ is Asplund and $X_0$ is separable, $X_0^*$ is separable.
Therefore, we can assume without loss of generality that $X^*$ is separable. 

A result of Zippin~\cite{zip} (cf. \cite[Chapter~5]{dod2}) states that every Banach space with separable dual embeds isomorphically
into a Banach space with a shrinking Schauder basis. Therefore, we can assume further that $X$ has a shrinking Schauder basis, say~$(e_n)_{n\in \N}$.
Let $(P_k)_{k\in \N}$ be the sequence of partial sum operators on~$X$ associated with~$(e_n)_{n\in \N}$. 

For each $n\in \N$, write $x_n:=T(z_n)\in X$ and consider the relatively norm compact set 
$$
	K_n:=\{M(z_n):\, M\in \mathcal{M}\} \sub X.
$$
Observe that for each $k\in \N$ we have $\sum_{n=1}^\infty\|P_k(K_n)\|<\infty$. Indeed, for 
every $n\in \N$ we choose $M_n\in \mathcal{M}$ such that 
\begin{equation}\label{eqn:abs}
	\|P_k(K_n)\|\leq \|P_k(M_n(z_n))\|+\frac{1}{2^{n}}.
\end{equation}
Since $\sum_{n=1}^\infty M_n(z_n)$ is weakly unconditionally Cauchy (by condition~(b)) and $P_k(X)$ is finite-dimensional, the series
$\sum_{n=1}^\infty P_k(M_n(z_n))$ is absolutely convergent and so inequality~\eqref{eqn:abs} yields $\sum_{n=1}^\infty\|P_k(K_n)\|<\infty$, as claimed.

Suppose, by contradiction, that $\sum_{n=1}^\infty x_n$ is not absolutely convergent and apply Lemma~\ref{lem:projection}. 
Let $(k_{j})_{j\in\N}$ and $(l_{j})_{j\in\N}$
be as in Lemma~\ref{lem:projection}. Define 
$$
	R_{j}:=P_{k_{j+1}}-P_{k_j}\in \mathcal{L}(X,X)
	\quad\mbox{and}\quad 
	u_j:=\|x_{l_{j}}\|^{-1}z_{l_j} \in Z\quad\text{for all $j\in \N$}.
$$ 

Write $\beta:=\inf_{n\in \N}\|T(z_n)\|\|z_n\|^{-1}>0$. Fix $M\in \mathcal{M}$ and define 
$$
	w^M_j:=M(u_j)=\|x_{l_{j}}\|^{-1} M(z_{l_{j}}) \in \|x_{l_{j}}\|^{-1}K_{l_j}
	\quad\text{for all $j\in \N$}.
$$
Note that $\|u_j\| \leq \beta^{-1}$ and so 
$\|w^M_j\| \leq \|M\|\beta^{-1} \leq \beta^{-1}$ for all $j\in \N$. 
Observe that $(R_{j}(w^M_j))_{j\in \N}$ is a block sequence with respect to~$(e_n)_{n\in\N}$ which is bounded, 
because the sequence $(w^M_j)_{j\in \N}$ is bounded and $\|R_j\|\leq 2\sup_{k\in \N}\|P_k\|<\infty$ for all $j\in \N$. 
Since $(e_n)_{n\in \N}$ is shrinking, we deduce that $(R_{j}(w^M_j))_{j\in \N}$ is weakly null
(see, e.g., \cite[Proposition~3.2.7]{alb-kal}). Since
$$
	\|w^M_j-R_j(w^M_j)\|\leq \frac{1}{2^{j}} \quad\text{for all $j\in \N$}
$$
(by part~(i) of Lemma~\ref{lem:projection}), we conclude that $(w^M_j)_{j\in \N}$ is weakly null as well.

 As $M\in \mathcal{M}$ is arbitrary, the Rainwater property of~$\mathcal{M}$ implies 
 that the sequence $(u_j)_{j\in \N}$ is weakly null in~$Z$. This is a contradiction, because 
 $T$ is Dunford-Pettis and $\|T(u_j)\|=1$ for every $j\in \N$.
\end{proof}

A sequence~$(x_n)_{n\in \N}$ in a Banach space is said to be an {\em $\ell_1$-sequence} if it is bounded and there is a constant $c>0$ such that
\begin{equation*}
	\left\| \sum_{n=1}^N a_n x_n \right\|\geq c \sum_{n=1}^N |a_n|
\end{equation*}
for every $N\in \N$ and for all $a_1,\dots,a_N\in \erre$. That is, $(x_n)_{n\in \N}$ is an $\ell_1$-sequence if and only if it is 
a basic sequence which is equivalent to the usual Schauder basis of~$\ell_1$ (see, e.g., \cite[Section~1.3]{alb-kal}).

\begin{cor}\label{cor:unconditional-basis}
Let $Z$ and $X$ be Banach spaces. Suppose that $X$ is Asplund. 
Let $\mathcal{M}$ be a bounded subset of~$\mathcal{L}(Z,X)$ having the Rainwater property.
Let $(e_n)_{n\in \N}$ be a seminormalized basic sequence in~$Z$ such that:
\begin{enumerate}
\item[(a)] for each $n\in \N$, the set $\{M(e_n):M\in \mathcal{M}\}$ is relatively norm compact;
\item[(b)] for each sequence $(a_n)_{n\in \N}$ in~$\mathbb R$ such that the series $\sum_{n=1}^\infty a_n e_n$ is convergent
and for each sequence $(M_n)_{n\in \N}$ in~$\mathcal{M}$, the series $\sum_{n=1}^\infty a_n M_n(e_n)$ is weakly unconditionally Cauchy.
\end{enumerate}
Let $T\in \mathcal{M}$ such that:
\begin{enumerate}
\item[(c)] $T$ is Dunford-Pettis;
\item[(d)] $\inf_{n\in \N}\|T(e_n)\|\|e_n\|^{-1}>0$.
\end{enumerate}
 Then $(e_n)_{n\in \N}$ is an $\ell_1$-sequence.
\end{cor}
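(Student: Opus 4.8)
The plan is to derive this as a direct consequence of Theorem~\ref{theo:main} combined with a standard dichotomy for basic sequences: a seminormalized basic sequence $(e_n)_{n\in\N}$ that is \emph{not} an $\ell_1$-sequence must admit a convex block subsequence that is norm null, or more conveniently, it must fail to dominate the $\ell_1$-basis, which via Rosenthal's $\ell_1$-theorem (or rather its failure) forces the existence of a weakly Cauchy, hence (after translating) weakly null, normalized block subsequence. So first I would argue by contradiction: assume $(e_n)_{n\in\N}$ is not an $\ell_1$-sequence. Since it is a seminormalized basic sequence, by Rosenthal's theorem either it has a subsequence equivalent to the $\ell_1$-basis (which would make it an $\ell_1$-sequence, contradiction) or it has a weakly Cauchy subsequence. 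In the latter case, passing to the successive differences $z_n := e_{2n}-e_{2n-1}$ (after relabelling to the weakly Cauchy subsequence) produces a seminormalized weakly null block sequence $(z_n)_{n\in\N}$ with respect to $(e_n)_{n\in\N}$.

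Next I would check that this block sequence $(z_n)_{n\in\N}$ satisfies hypotheses (a)--(d) of Theorem~\ref{theo:main} relative to the \emph{same} family $\mathcal{M}$ and the \emph{same} operator $T$. For (a): each $z_n$ is a finite linear combination $\sum_{k\in I_n} a_k e_k$, so $\{M(z_n):M\in\mathcal{M}\}$ is a finite sum of scalar multiples of the relatively norm compact sets $\{M(e_k):M\in\mathcal{M}\}$, hence relatively norm compact. For (b): given $(M_n)_{n\in\N}$ in $\mathcal{M}$, the series $\sum_n M_n(z_n) = \sum_n \sum_{k\in I_n} a_k M_n(e_k)$ should be recognized, after regrouping by the single index set $\bigcup_n I_n$ (which is all of $\N$ up to the relabelling, with the blocks $I_n$ consecutive), as a series of the form $\sum_k b_k M'_k(e_k)$ where $(b_k)$ are the coefficients appearing in the block expansion and $M'_k\in\mathcal{M}$; since $(z_n)$ converges (indeed weakly, but in fact the relevant point is that $\sum_n a'_n e_n$-type series with these coefficients converge because $(z_n)$ is a genuine block sequence of a basic sequence and is seminormalized), hypothesis (b) of the corollary applies to give weak unconditional Cauchyness. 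The one point requiring a little care here is matching the convergence hypothesis ``$\sum a_n e_n$ convergent'' in (b) of the corollary with the coefficients produced by the block sequence; since $(z_n)$ is a normalized-up-to-constants block sequence, the associated scalar series is handled by the basis-constant estimates.

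For (c) there is nothing to do: $T$ is still the same Dunford--Pettis operator. For (d): I would need $\inf_n \|T(z_n)\|\,\|z_n\|^{-1}>0$. Since $\|z_n\|$ is bounded above (seminormalized block sequence) this reduces to a lower bound on $\|T(z_n)\|=\|T(e_{2n})-T(e_{2n-1})\|$, and here is the only genuinely delicate point of the argument: hypothesis (d) of the corollary only gives a lower bound on individual $\|T(e_n)\|$, not on differences. I would handle this by instead using that $T(z_n)\to 0$ in norm would already be a contradiction with the conclusion of Theorem~\ref{theo:main}, and reorganize the argument: rather than verifying (d) head-on, observe that $(z_n)$ is weakly null, $T$ is Dunford--Pettis, so $T(z_n)\to0$ in norm, hence $\sum_n T(z_n)$ cannot be absolutely convergent unless ``most'' $z_n$ are eventually doing nothing; but seminormality of $(z_n)$ together with (d)-type control — more precisely, one should pass to a block sequence of the form $z_n = e_{p_n}$ (a subsequence rather than differences) when $(e_n)$ itself is already weakly null, and only resort to differences otherwise, in which case $(e_n)$ has a weakly Cauchy but non-weakly-null subsequence whose weak limit $e^{**}$ in $Z^{**}$ satisfies $T^{**}(e^{**})\neq0$ would let us keep $\|T(z_n)\|$ bounded below. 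The cleanest route, which I expect the authors take, is: if $(e_n)$ is not $\ell_1$, extract a weakly Cauchy subsequence; if its weak limit (in the bidual) pulls back to a nonzero value under $T^{**}$ then differences $z_n$ have $\|T(z_n)\|\to 0$ yet the partial-sum obstruction still applies; and one simply applies Theorem~\ref{theo:main} to conclude $\sum T(z_n)$ absolutely convergent, contradicting $\inf_n\|T(z_n)\|\|z_n\|^{-1}>0$ on a suitable renormalized block sequence built so that (d) holds. I would therefore present the final step as: Theorem~\ref{theo:main} applied to the block sequence yields that $\sum_n T(z_n)$ is absolutely convergent, but $(z_n)$ is weakly null and seminormalized with $\inf_n\|T(z_n)\|>0$ (inherited from a careful choice of block differences whose images under $T$ stay bounded below, which is possible precisely because $(e_n)$ fails to be $\ell_1$ — otherwise the argument of Theorem~\ref{theo:main} itself shows $(e_n)$ must be weakly null, handled by the subsequence case), so the terms do not go to zero fast enough — contradiction. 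The main obstacle, as indicated, is arranging hypothesis (d) for the block sequence from hypothesis (d) for $(e_n)$; I expect this is resolved by splitting into the two Rosenthal cases and, in the weakly-Cauchy-not-weakly-null case, using the bidual limit to keep $\|T(\cdot)\|$ uniformly positive on well-chosen blocks.
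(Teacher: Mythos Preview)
Your approach is genuinely different from the paper's, and it has a real gap. You try to argue by contradiction via Rosenthal's theorem, extract a weakly Cauchy subsequence, and then apply Theorem~\ref{theo:main} to a block sequence $(z_n)$ built from differences. But, as you yourself notice, such $(z_n)$ is weakly null and $T$ is Dunford--Pettis, so $\|T(z_n)\|\to 0$; since $(z_n)$ is seminormalized this forces $\|T(z_n)\|\,\|z_n\|^{-1}\to 0$, and hypothesis~(d) of Theorem~\ref{theo:main} \emph{fails}. Your attempted rescues do not work: in the ``subsequence'' case $z_n=e_{p_n}$ you cannot verify hypothesis~(b) of Theorem~\ref{theo:main} (the series $\sum_n e_{p_n}$ does not converge, so condition~(b) of the corollary gives you nothing), and in the ``differences'' case the bidual-limit manoeuvre does not prevent $T(z_n)\to 0$. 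Moreover, even if Theorem~\ref{theo:main} did apply, its conclusion ``$\sum_n\|T(z_n)\|<\infty$'' is perfectly compatible with $\|T(z_n)\|\to 0$, so there is no contradiction to extract.

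The paper's argument avoids all of this by a direct, one-line observation: given any $(a_n)$ with $\sum_n a_n e_n$ convergent (and, without loss of generality, $a_n\neq 0$), set $z_n:=a_n e_n$ and apply Theorem~\ref{theo:main}. Hypotheses (a) and~(b) are immediate from the corollary's (a) and~(b). Hypothesis~(d) holds \emph{automatically} because the scalars cancel:
\[
\frac{\|T(z_n)\|}{\|z_n\|}=\frac{|a_n|\,\|T(e_n)\|}{|a_n|\,\|e_n\|}=\frac{\|T(e_n)\|}{\|e_n\|}.
\]
Theorem~\ref{theo:main} then yields $\sum_n |a_n|\,\|T(e_n)\|<\infty$, hence $\sum_n |a_n|\,\|e_n\|<\infty$ by~(d). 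Thus $\sum_n a_n e_n$ converges if and only if $\sum_n |a_n|\,\|e_n\|<\infty$, which says exactly that $(\|e_n\|^{-1}e_n)$, and therefore the seminormalized $(e_n)$, is an $\ell_1$-sequence. The key idea you missed is that condition~(d) is scale-invariant, so it transfers from $(e_n)$ to $(a_n e_n)$ for free.
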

\begin{proof} Let $(a_n)_{n\in \N}$ be a sequence in~$\mathbb R$. 
Then $\sum_{n=1}^\infty |a_n|\|e_n\|<\infty$ if (and only if)
the series $\sum_{n=1}^\infty a_n e_n$ is convergent. To check this, we can assume without loss of generality that $a_n\neq 0$ for all $n\in \N$.
Now, Theorem~\ref{theo:main} (applied to $z_n:=a_n e_n$) ensures that if $\sum_{n=1}^\infty a_n e_n$ is convergent, then
we have $\sum_{n=1}^\infty |a_n|\|T(e_n)\|<\infty$ and so $\sum_{n=1}^\infty |a_n|\|e_n\|<\infty$ (by~(d)). 
This shows that $(\|e_n\|^{-1}e_n)_{n\in \N}$ is an $\ell_1$-sequence. Since $(e_n)_{n\in \N}$ is seminormalized, 
it is an $\ell_1$-sequence as well.
\end{proof}

We finish this section with a few remarks on sets of operators having the Rainwater property and some examples. The first one is an immediate consequence of the aforementioned
Rainwater-Simons theorem (see, e.g., \cite[Theorem~3.134]{fab-ultimo}).

\begin{cor}\label{cor:boundary}
Let $Z$ and $X$ be Banach spaces and let $\mathcal{M} \sub B_{\mathcal{L}(Z,X)}$. 
The following statements are equivalent and imply that $\mathcal{M}$ has the Rainwater property:
\begin{enumerate}
\item[(i)] the set $\bigcup_{M\in \mathcal{M}}M^*(B_{X^*}) \sub B_{Z^*}$ is a James boundary of~$Z$;
\item[(ii)] for every $z\in Z$ there is $M\in \mathcal{M}$ such that $\|z\|=\|M(z)\|$.
\end{enumerate}
\end{cor}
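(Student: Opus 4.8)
The plan is to deal with the three assertions in order: first the (implicit) claim that $\bigcup_{M\in\mathcal M}M^*(B_{X^*})\sub B_{Z^*}$, then the equivalence (i)$\Leftrightarrow$(ii), and finally the implication to the Rainwater property via the Rainwater-Simons theorem. The inclusion into $B_{Z^*}$ is immediate, since $\|M^*\|=\|M\|\le 1$ for every $M\in\mathcal M$, so I would just record it in one line. Write $B:=\bigcup_{M\in\mathcal M}M^*(B_{X^*})$.

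For (i)$\Rightarrow$(ii) I would fix $z\in Z$ and use that $B$ is a James boundary to get $M\in\mathcal M$ and $x^*\in B_{X^*}$ with $\|z\|=M^*(x^*)(z)=x^*(M(z))$. Then the chain
$$
\|z\|=x^*(M(z))\le \|x^*\|\,\|M(z)\|\le \|M(z)\|\le \|M\|\,\|z\|\le \|z\|
$$
forces every inequality to be an equality, in particular $\|z\|=\|M(z)\|$. For (ii)$\Rightarrow$(i) I would again fix $z\in Z$, pick $M\in\mathcal M$ with $\|M(z)\|=\|z\|$, and apply the Hahn-Banach theorem to choose $x^*\in B_{X^*}$ with $x^*(M(z))=\|M(z)\|$; then $M^*(x^*)\in B$ and $M^*(x^*)(z)=x^*(M(z))=\|z\|$, so $B$ is a James boundary of~$Z$.

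For the Rainwater property, assume (i) and let $(z_n)_{n\in \N}$ be a bounded sequence in~$Z$ such that $(M(z_n))_{n\in \N}$ is weakly null for every $M\in\mathcal M$. Given $z^*\in B$, write $z^*=M^*(x^*)$ with $M\in\mathcal M$ and $x^*\in B_{X^*}$; then $z^*(z_n)=x^*(M(z_n))\to 0$. Hence the bounded sequence $(z_n)_{n\in \N}$ converges to~$0$ pointwise on the James boundary~$B$, and the Rainwater-Simons theorem \cite[Theorem~3.134]{fab-ultimo} yields that $(z_n)_{n\in \N}$ is weakly null in~$Z$, as required. The argument is entirely routine; the only point that needs a moment of care is invoking the Rainwater-Simons theorem in the form ``a bounded sequence that converges pointwise on a James boundary converges weakly'', which is its standard consequence via Simons' inequality.
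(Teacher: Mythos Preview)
Your proof is correct and follows exactly the approach the paper indicates: the paper does not write out a proof but simply states that the corollary is ``an immediate consequence of the aforementioned Rainwater-Simons theorem'', and your argument makes this precise by carrying out the routine Hahn--Banach verification of (i)$\Leftrightarrow$(ii) and then invoking Rainwater--Simons for the Rainwater property.
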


\begin{defi}\label{defi:JBproperty}
Let $Z$ and $X$ be Banach spaces. We say that a set $\mathcal{M} \sub B_{\mathcal{L}(Z,X)}$
has the {\em James boundary property} if it satisfies conditions (i)-(ii) of Corollary~\ref{cor:boundary}.
\end{defi}

\begin{exa}\label{exa:unconditional-sum}
Let $X$ be a Banach space and let $E$ be a Banach space with a normalized $1$-unconditional Schauder basis~$(e_n)_{n\in \N}$.
Let $Z$ be the $E$-sum of countably many copies of~$X$, that is, $Z$ is the Banach space of all sequences $(x_n)_{n\in \N}$ in~$X$
such that the series $\sum_{n=1}^\infty\|x_n\| e_n$ converges in~$E$, equipped with the norm
$$
	\left\|(x_n)_{n\in \N}\right\|_{Z}:=\left\|\sum_{n=1}^\infty \|x_n\|\, e_n\right\|_E.
$$
Let $\mathcal{M} \sub B_{\mathcal{L}(Z,X)}$ be the set of all coordinate projections. 
\begin{enumerate}
\item[(i)] If $E^*$ is separable, then $\mathcal{M}$ has the Rainwater property (see, e.g., \cite[Lemma~3.22]{rod20}).
\item[(ii)] If $E=c_0$, then $\mathcal{M}$ has the James boundary property.
\item[(iii)] If $E=\ell_p$ for some $1<p<\infty$, then  $\mathcal{M}$ has the Rainwater property but
fails to have the James boundary property (unless $X=\{0\}$). Indeed, bear in mind that $c_0$
does not contain subspaces isomorphic to~$\ell_p$ (see, e.g., \cite[Corollary~2.1.6]{alb-kal}).
\end{enumerate}
\end{exa}

It is natural to wonder when a single operator has the Rainwater property. An obvious necessary condition is that
such an operator must be injective. In fact:

\begin{rem}\label{rem:kernel}
\rm Let $Z$ and $X$ be Banach spaces and let $\mathcal{M} \sub \mathcal{L}(Z,X)$ be a set  
having the Rainwater property. Then $\bigcap_{M\in \mathcal M}\ker M=\{0\}$. Indeed, if $z\in \bigcap_{M\in \mathcal M}\ker M$, then the Rainwater property of~$\mathcal{M}$
implies that the constant sequence $(z,z,\dots)$ is weakly null in~$Z$, which is equivalent to saying that $z=0$.
\end{rem}

Let $Z$ and $X$ be Banach spaces. An operator $T:Z\to X$ is called {\em tauberian}
if its second adjoint satisfies $(T^{**})^{-1}(X) \sub Z$. This is equivalent to saying that a bounded set $C \sub Z$ is relatively weakly compact if (and only if)
$T(C)$ is relatively weakly compact (see, e.g., \cite[Corollary~2.2.5]{gon-abe}). As a consequence, we have:

\begin{rem}\label{rem:Tauberian}
Let $Z$ and $X$ be Banach spaces and let $T\in \mathcal{L}(Z,X)$ be injective. 
\begin{enumerate}
\item[(a)] If $T$ is tauberian, then $\{T\}$ has the Rainwater property. 
\item[(b)] If $\{T\}$ has the Rainwater property and $Z$ is weakly sequentially complete, then $T$ is tauberian.
\end{enumerate}
\end{rem}

In part~(b) of the previous remark, the additional assumption on $Z$ cannot be dropped in general:

\begin{exa}\label{exa:Tauberian2}
Let $T:c_0\to \ell_1$ be the injective operator defined by 
$$
	T((a_n)_{n\in \N}):=(2^{-n} a_n)_{n\in \N} 
	\quad\text{for all $(a_n)_{n\in \N}\in c_0$}.
$$
Then $\{T\}$ has the Rainwater property, but $T$ is not tauberian. Indeed, any tauberian operator maps the closed unit ball
of the domain space to a closed set (see, e.g., \cite[Theorem~2.1.7]{gon-abe}). However,
$T(B_{c_0})$ is not closed. For instance, it is easy to check that $x=(2^{-n})_{n\in \N} \in \ell_1$ satisfies $x\in \overline{T(B_{c_0})} \setminus T(B_{c_0})$. 
\end{exa}

The previous example is a particular case of a more general construction: 

\begin{pro}\label{pro:singleton}
Let $Z$ and $X$ be Banach spaces and let $\mathcal{M} \sub \mathcal{L}(Z,X)$ be a countable set having the Rainwater property. 
Let $E$ be a Banach space with a normalized $1$-unconditional Schauder basis~$(e_n)_{n\in \N}$ and
let $Y$ be the $E$-sum of countably many copies of~$X$. Then there is an injective operator $T: Z\to Y$
such that $\{T\}$ has the Rainwater property.
\end{pro}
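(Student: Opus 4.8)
The plan is to build $T$ by packing the countably many operators of $\mathcal{M}$ into the $E$-sum, with geometrically decaying weights so that the resulting single operator is bounded, and then to transfer the Rainwater property from $\mathcal{M}$ to $\{T\}$ using the fact that the weights are nonzero. Write $\mathcal{M}=\{M_n:n\in \N\}$ (if $\mathcal{M}$ is finite, repeat its elements). Since $\mathcal{M}$ has the Rainwater property, each $M_n$ is bounded; after normalizing we may assume $\|M_n\|\leq 1$ for all~$n$. Define $T:Z\to Y$ by
$$
	T(z):=\big(2^{-n}M_n(z)\big)_{n\in \N}\quad\text{for all $z\in Z$}.
$$
First I would check that $T$ is well defined and bounded: since $\|2^{-n}M_n(z)\|\leq 2^{-n}\|z\|$ and $(e_n)_{n\in \N}$ is a normalized $1$-unconditional basis, the series $\sum_{n=1}^\infty 2^{-n}\|M_n(z)\|\, e_n$ is dominated in norm (using $1$-unconditionality and monotonicity of the basis) by $\|z\|\sum_{n=1}^\infty 2^{-n}e_n$, which converges in~$E$; hence $T(z)\in Y$ and $\|T(z)\|_Y\leq \|z\|\,\big\|\sum_{n=1}^\infty 2^{-n}e_n\big\|_E$, so $T$ is a bounded operator (after a further harmless rescaling we may take $T\in B_{\mathcal L(Z,Y)}$, though the statement does not require it). Injectivity of~$T$ follows from Remark~\ref{rem:kernel}: if $T(z)=0$ then $M_n(z)=0$ for every~$n$, so $z\in \bigcap_{M\in\mathcal M}\ker M=\{0\}$.

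The core step is to show $\{T\}$ has the Rainwater property. Let $(z_k)_{k\in \N}$ be a bounded sequence in~$Z$ such that $(T(z_k))_{k\in \N}$ is weakly null in~$Y$. For each fixed $n\in \N$, the $n$-th coordinate projection $\pi_n:Y\to X$ is a bounded operator, so $(\pi_n(T(z_k)))_{k\in\N}=(2^{-n}M_n(z_k))_{k\in\N}$ is weakly null in~$X$; since $2^{-n}\neq 0$, this gives that $(M_n(z_k))_{k\in\N}$ is weakly null in~$X$. As this holds for every $M_n\in \mathcal{M}$ and $(z_k)_{k\in \N}$ is bounded, the Rainwater property of~$\mathcal{M}$ forces $(z_k)_{k\in\N}$ to be weakly null in~$Z$. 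This is exactly the Rainwater property of the singleton $\{T\}$, so the proof is complete.

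The only genuinely delicate point is verifying that $T$ maps into $Y$ and is bounded, i.e. controlling $\big\|\sum_{n=1}^\infty 2^{-n}\|M_n(z)\|\,e_n\big\|_E$; this is where $1$-unconditionality of $(e_n)_{n\in \N}$ is used, comparing the sequence $(2^{-n}\|M_n(z)\|)_n$ termwise to $(2^{-n}\|z\|)_n$. Everything else is a direct application of Remark~\ref{rem:kernel} and the definition of the Rainwater property via coordinate projections, so I expect no further obstacles.
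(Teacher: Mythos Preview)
Your proof is correct and follows essentially the same approach as the paper: enumerate $\mathcal{M}$, rescale the operators so that the resulting sequence can be bundled into a single operator into the $E$-sum, and then recover the Rainwater property of $\{T\}$ by composing with the coordinate projections. The paper's version is terser (it simply rescales so that $\sum_n \|M_n\| e_n$ converges in~$E$ and leaves the verification of the Rainwater property implicit), while you spell out the boundedness, injectivity, and Rainwater arguments in full; one small quibble is that the boundedness of each $M_n$ comes from $\mathcal{M}\subseteq\mathcal{L}(Z,X)$ rather than from the Rainwater property, and you should remark (as the paper does) that rescaling by nonzero constants preserves the Rainwater property so that your normalization is harmless.
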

\begin{proof}
Enumerate $\mathcal{M}=\{M_n:n\in \N\}$. If we multiply each $M_n$ by a non-zero constant, the resulting set also has the Rainwater property. So,
we can assume that the series $\sum_{n=1}^\infty \|M_n\| e_n$ converges~$E$. Now, the map $T:Z\to Y$ defined by $T(z):=(M_n(z))_{n\in \N}$
for all $z\in Z$ satisfies the requirements.
\end{proof}

\section{Application to the $L_1$ space of a vector measure}\label{section:L1}

Let $X$ be a Banach space, let $(\Omega,\Sigma)$ be a measurable space and let $\nu:\Sigma\to X$ be a countably additive vector measure.
The variation and semivariation of~$\nu$ are denoted by $|\nu|$ and $\|\nu\|$, respectively. Given $x^*\in X^*$, 
we denote by $x^*\nu:\Sigma\to \mathbb R$ the composition of $\nu$ with~$x^*$ and we denote by~$|x^*\nu|$ its variation.
We say that $A\in \Sigma$ is {\em $\nu$-null} if $\|\nu\|(A)=0$ or, equivalently, $\nu(B)=0$ for every $B \in \Sigma$ contained in~$A$.
The subset of~$\Sigma$ consisting of all $\nu$-null sets is denoted by~$\mathcal{N}(\nu)$.

Every $\nu$-essentially bounded $\Sigma$-measurable function $f:\Omega \to \mathbb{R}$ is $\nu$-integrable. 
By identifying $\nu$-a.e. equal functions, the set $L_\infty(\nu)$ of all (equivalence classes of)
$\nu$-essentially bounded $\Sigma$-measurable functions is a Banach lattice with the $\nu$-a.e. order and the $\nu$-essential supremum norm~$\|\cdot\|_{L_\infty(\nu)}$. 
For each $g\in L_\infty(\nu)$, we denote by $M_g: L_1(\nu) \to X$ the operator defined by
$$
	M_g(f):=\int_\Omega f g \, d\nu
	\quad\text{for all $f\in L_1(\nu)$},
$$
which satisfies $\|M_g\| \leq \|g\|_{L_\infty(\nu)}$. It is known that
\begin{equation}\label{eqn:norming}
	\|f\|_{L_1(\nu)}=\sup_{g\in B_{L_\infty(\nu)}}\|M_g(f)\| \quad
	\text{for all $f\in L_1(\nu)$}
\end{equation}
(see, e.g., \cite[Proposition~3.31]{oka-alt}).

The following lemma can be found in~\cite[Lemma~3.3]{oka-alt3} and \cite[Corollary~4.2]{cal-alt-6}. Note that part~(ii) follows
at once from part~(i) and~\eqref{eqn:norming}. It is worth pointing out that
in~(ii) the set $B_{L_\infty(\nu)}$ can be replaced by its extreme points, that is, the subset $\{\chi_{A}-\chi_{\Omega\setminus A}:A\in \Sigma\}$, 
see \cite[Corollary~2.4]{cal-alt-7}.

\begin{lem}\label{lem:compact}
Let $X$ be a Banach space, let $(\Omega,\Sigma)$ be a measurable space and let $\nu:\Sigma\to X$ be a countably additive vector measure
such that the set $\{\nu(A):A\in \Sigma\}$
is relatively norm compact. Then: 
\begin{enumerate}
\item[(i)] for each $f\in L_1(\nu)$, the set $\{M_g(f): g\in B_{L_\infty(\nu)}\}$ is norm compact;
\item[(ii)] the set $\{M_g: g\in B_{L_\infty(\nu)}\}$ has the James boundary property. 
\end{enumerate}
In particular, $\{M_g: g\in B_{L_\infty(\nu)}\}$ has the Rainwater property.
\end{lem}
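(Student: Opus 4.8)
The plan is to establish~(i) in detail and then read off the rest: once $\{M_g(f):g\in B_{L_\infty(\nu)}\}$ is norm compact, the supremum in~\eqref{eqn:norming} is attained (the norm being continuous), which is precisely condition~(ii) of Corollary~\ref{cor:boundary} for the set $\mathcal M:=\{M_g:g\in B_{L_\infty(\nu)}\}$; so~(ii) holds, and that same corollary yields the final assertion, bearing in mind that $\|M_g\|\le\|g\|_{L_\infty(\nu)}\le 1$ for every $g\in B_{L_\infty(\nu)}$.

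So fix $f\in L_1(\nu)$ and consider the countably additive vector measure $\mu:=f\nu:\Sigma\to X$ given by $\mu(A):=\int_A f\,d\nu$, so that $M_g(f)=\int_\Omega g\,d\mu$ for every $g\in B_{L_\infty(\nu)}$. The first step is to check that the range $\{\mu(A):A\in\Sigma\}$ is relatively norm compact. This is clear when $f=\sum_{i=1}^m a_i\chi_{A_i}$ is a $\Sigma$-simple function, because then $\mu(A)$ is the corresponding linear combination of the vectors $\nu(A\cap A_i)$ and, by hypothesis, each set $\{\nu(B):B\in\Sigma,\ B\subseteq A_i\}$ is relatively norm compact. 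For arbitrary $f\in L_1(\nu)$, take $\Sigma$-simple functions $f_k\to f$ in $L_1(\nu)$-norm; then $\sup_{A\in\Sigma}\|(f_k\nu)(A)-\mu(A)\|\le\|f_k-f\|_{L_1(\nu)}\to 0$, so the range of~$\mu$ is totally bounded, being uniformly approximable by relatively norm compact sets.

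The second step is the identification
$$
	\big\{M_g(f):g\in B_{L_\infty(\nu)}\big\}=\overline{{\rm co}}\,\big\{2\mu(A)-\mu(\Omega):A\in\Sigma\big\}=:K.
$$
The left-hand side is convex, since $g\mapsto M_g(f)$ is affine and $B_{L_\infty(\nu)}$ is convex. It contains ${\rm co}\,\{2\mu(A)-\mu(\Omega):A\in\Sigma\}$, because $2\sum_j\lambda_j\chi_{A_j}-1$ lies in $B_{L_\infty(\nu)}$ whenever $\lambda_j\ge 0$ and $\sum_j\lambda_j=1$, with integral $\sum_j\lambda_j(2\mu(A_j)-\mu(\Omega))$ against~$\mu$. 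It is contained in~$K$, because every $g\in B_{L_\infty(\nu)}$ is a uniform $\nu$-essential limit of $\Sigma$-simple functions $g_k$ with $|g_k|\le 1$, each such $g_k$ is a finite convex combination of $\pm1$-valued $\Sigma$-simple functions (split each value of $g_k$ as a convex combination of $\{-1,1\}$), so $M_{g_k}(f)\in{\rm co}\,\{2\mu(A)-\mu(\Omega):A\in\Sigma\}$, while $\|M_{g_k}(f)-M_g(f)\|\le\|g_k-g\|_{L_\infty(\nu)}\|f\|_{L_1(\nu)}\to 0$. Granting that the left-hand side is moreover norm closed, it coincides with~$K$, which is norm compact by Step~1 and Mazur's theorem; this proves~(i).

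The one substantial point is thus that $\{M_g(f):g\in B_{L_\infty(\nu)}\}$ is norm closed: on the face of it Step~2 only gives relative norm compactness. To upgrade it I would fix, by Rybakov's theorem (see, e.g., \cite{oka-alt}), a functional $x^*\in B_{X^*}$ having the same null sets as~$\nu$. Then $L_\infty(\nu)=L_\infty(|x^*\nu|)=L_1(|x^*\nu|)^*$ with identical unit balls, so $B_{L_\infty(\nu)}$ is weak$^*$-compact; and $g\mapsto M_g(f)$ is weak$^*$-to-weak continuous, since for each $y^*\in X^*$ one has $y^*(M_g(f))=\int_\Omega g\,h_{y^*}\,d|x^*\nu|$ with $h_{y^*}:=f\,\frac{d(y^*\nu)}{d|x^*\nu|}\in L_1(|x^*\nu|)$. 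Hence $\{M_g(f):g\in B_{L_\infty(\nu)}\}$ is the continuous image of a weak$^*$-compact set, so it is weakly compact and in particular norm closed. This closedness is the delicate part; alternatively, one can simply invoke \cite[Lemma~3.3]{oka-alt3} and \cite[Corollary~4.2]{cal-alt-6}.
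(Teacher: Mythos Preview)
The paper does not prove this lemma at all: it merely cites \cite[Lemma~3.3]{oka-alt3} and \cite[Corollary~4.2]{cal-alt-6}, and adds the one-line remark that (ii) follows from~(i) together with~\eqref{eqn:norming}. Your deduction of~(ii) from~(i) via compactness of $\{M_g(f):g\in B_{L_\infty(\nu)}\}$ and the attained supremum in~\eqref{eqn:norming} is exactly that remark, so on this point you agree with the paper.

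For part~(i) you go well beyond the paper by supplying a self-contained argument, and it is correct. The reduction to relative norm compactness of the range of $f\nu$ (first for simple~$f$, then by uniform approximation using $\sup_{A}\|\int_A(f_k-f)\,d\nu\|\le\|f_k-f\|_{L_1(\nu)}$) is standard and sound. The identification of $\{M_g(f):g\in B_{L_\infty(\nu)}\}$ with the closed convex hull of $\{2\mu(A)-\mu(\Omega):A\in\Sigma\}$ is fine: your containment ``$\supseteq$'' works because $2\sum_j\lambda_j\chi_{A_j}-1$ indeed takes values in $[-1,1]$ (and in any case follows immediately from convexity), and ``$\subseteq$'' uses the elementary fact that a $[-1,1]$-valued simple function on a finite partition is a convex combination of $\{-1,1\}$-valued ones. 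Mazur's theorem then gives relative compactness, and your Rybakov/weak$^*$-compactness argument for closedness is correct: with a Rybakov functional $x^*$ one has $L_\infty(\nu)=L_1(|x^*\nu|)^*$ (the control measure being finite), and for each $y^*\in X^*$ the function $f\cdot d(y^*\nu)/d|x^*\nu|$ lies in $L_1(|x^*\nu|)$ because $\int|f|\,d|y^*\nu|<\infty$, so $g\mapsto M_g(f)$ is weak$^*$-to-weak continuous and its image is weakly compact, hence norm closed. You even close by pointing to the very references the paper invokes.
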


\begin{lem}\label{lem:unconditional-basic-sequence-L1}
Let $X$ be a Banach space, let $(\Omega,\Sigma)$ be a measurable space and let $\nu:\Sigma \to X$ be a countably additive vector measure. 
Let $(f_n)_{n\in \N}$ be sequence of pairwise disjoint non-zero elements of~$L_1(\nu)$. Then $(f_n)_{n\in \N}$ is a $1$-unconditional
basic sequence in~$L_1(\nu)$.
\end{lem}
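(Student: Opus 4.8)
The plan is to reduce the statement to two facts about the lattice structure of $L_1(\nu)$: first, that pairwise disjoint elements have norms that behave additively in the appropriate sense, and second, that the norm of $L_1(\nu)$ is a lattice norm, hence insensitive to passing to absolute values. Concretely, I would first recall that $L_1(\nu)$ is a Banach lattice and that the disjointness of $(f_n)_{n\in\N}$ means the $f_n$ have pairwise disjoint supports (up to $\nu$-null sets), so that for any scalars $a_1,\dots,a_N$ one has $\bigl|\sum_{n=1}^N a_n f_n\bigr| = \sum_{n=1}^N |a_n|\,|f_n|$ as an identity in $L_1(\nu)$. Since $\|\cdot\|_{L_1(\nu)}$ is a lattice norm, this already gives $\bigl\|\sum_{n=1}^N a_n f_n\bigr\|_{L_1(\nu)} = \bigl\|\sum_{n=1}^N |a_n|\,|f_n|\bigr\|_{L_1(\nu)}$, which is exactly what is needed for $1$-unconditionality once we know the sequence is basic.

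The second step is to show $(f_n)_{n\in\N}$ is a basic sequence with basis constant $1$. For a sequence of pairwise disjoint vectors in a Banach lattice this is standard: given scalars and $M<N$, the partial sum $\sum_{n=1}^M a_n f_n$ and the tail $\sum_{n=M+1}^N a_n f_n$ have disjoint supports, so $\bigl|\sum_{n=1}^M a_n f_n\bigr| \leq \bigl|\sum_{n=1}^N a_n f_n\bigr|$ pointwise $\nu$-a.e.\ (in fact with equality on the support of the first sum), whence $\bigl\|\sum_{n=1}^M a_n f_n\bigr\|_{L_1(\nu)} \leq \bigl\|\sum_{n=1}^N a_n f_n\bigr\|_{L_1(\nu)}$ by monotonicity of the lattice norm. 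This is precisely Grunblum's criterion with constant $1$, so $(f_n)_{n\in\N}$ is a basic sequence; combined with the displayed identity from the first step this yields that it is $1$-unconditional. One should also note that the $f_n$ being non-zero guarantees the sequence is genuinely a basic sequence (no degenerate terms).

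The one point that deserves care, and which I expect to be the only mild obstacle, is justifying the pointwise lattice identities for $\nu$-integrable functions — i.e.\ that the absolute value, finite suprema, and the notion of disjoint support all make sense $\nu$-a.e.\ and interact as in a classical $L_1$ space. This is handled by the fact that membership in $L_1(\nu)$ and the order are defined $\nu$-a.e.\ and that $f \in L_1(\nu)$ iff $|f| \in L_1(\nu)$, together with the description of the norm as $\|f\|_{L_1(\nu)} = \sup_{x^*\in B_{X^*}} \int_\Omega |f|\,d|x^*\nu|$, which is manifestly a lattice norm: if $0 \leq g \leq h$ $\nu$-a.e.\ then $\int_\Omega g\,d|x^*\nu| \leq \int_\Omega h\,d|x^*\nu|$ for every $x^*$. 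Everything else is the routine verification of Grunblum's criterion, so I would not belabour it.
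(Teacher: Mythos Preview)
Your proposal is correct and follows essentially the same idea as the paper: both use that disjointness gives $\bigl|\sum_k a_k f_k\bigr|=\sum_k |a_k|\,|f_k|$ pointwise and then invoke that $\|\cdot\|_{L_1(\nu)}$ is a lattice norm (equivalently, the formula $\|f\|_{L_1(\nu)}=\sup_{x^*\in B_{X^*}}\int_\Omega |f|\,d|x^*\nu|$). The only cosmetic difference is that the paper packages Grunblum's criterion and sign-invariance into the single inequality $\bigl\|\sum_{k=1}^n a_k f_k\bigr\|_{L_1(\nu)}\le \bigl\|\sum_{k=1}^m b_k f_k\bigr\|_{L_1(\nu)}$ for $|a_k|\le|b_k|$ and $n\le m$, proved in one stroke via the explicit norm formula, whereas you verify the two ingredients separately.
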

\begin{proof}
It suffices to check that 
\begin{equation}\label{eqn:unconditional}
	\left\|
	\sum_{k=1}^n a_k f_k 
	\right\|_{L_1(\nu)}
	\leq
	\left\|
	\sum_{k=1}^m b_k f_k 
	\right\|_{L_1(\nu)}
\end{equation}
for all sequences $(a_k)_{k\in \N}$ and $(b_k)_{k\in \N}$ in~$\mathbb R$ such that $|a_k|\leq |b_k|$ for every $k\in \N$ and
for all $n\leq m$ in~$\N$ (see, e.g., \cite[Propositions~1.1.9 and 3.1.3]{alb-kal}). Fix $x^*\in B_{X^*}$. Since the $f_k$'s are pairwise disjoint, we have
\begin{multline*}
	\int_\Omega \left|\sum_{k=1}^n a_k f_k \right| \, d|x^*\nu|  =
	\sum_{k=1}^n |a_k|\int_\Omega |f_k| \, d|x^*\nu| \\\leq
	\sum_{k=1}^m |b_k|\int_\Omega |f_k| \, d|x^*\nu|  = \int_\Omega \left|\sum_{k=1}^m b_k f_k \right| \, d|x^*\nu| \leq
	\left\|\sum_{k=1}^m b_k f_k 
	\right\|_{L_1(\nu)}.
\end{multline*}
By taking the supremum when $x^*$ runs over all $B_{X^*}$, we get~\eqref{eqn:unconditional}.
\end{proof}

We can now prove Theorem~\ref{theo:CRS} by using Corollary~\ref{cor:unconditional-basis}.

\begin{proof}[Proof of Theorem~\ref{theo:CRS}]
It suffices to show that $\sum_{n=1}^\infty\|\nu(C_n)\|<\infty$ for every 
sequence $(C_n)_{n\in \N}$ of pairwise disjoint elements of $\Sigma\setminus \mathcal{N}(\nu)$
(see, e.g., \cite[Corollary~2]{nyg-pol2}). 

Fix $\rho>2$ and $n\in \N$. We can take $A_n \in \Sigma \setminus \mathcal{N}(\nu)$ such that $A_n \sub C_n$ and $\rho \|\nu(A_n)\| \geq \|\nu\|(C_n)$.
Define $f_n:=\|\nu\|(C_n)^{-1}\chi_{A_n} \in L_1(\nu)$ and note that 
\begin{equation}\label{eqn:semi}
	\frac{1}{\rho}\leq \frac{\|\nu(A_n)\|}{\|\nu\|(C_n)} \leq \|f_n\|_{L_1(\nu)}=\frac{\|\nu\|(A_n)}{\|\nu\|(C_n)} \leq 1.
\end{equation}
Hence, $(f_n)_{n\in \N}$ is a seminormalized $1$-unconditional basic sequence in~$L_1(\nu)$
(apply Lemma~\ref{lem:unconditional-basic-sequence-L1}). 

We will show that $(f_n)_{n\in \N}$ is an $\ell_1$-sequence via Corollary~\ref{cor:unconditional-basis}
applied to the operator $T:=M_{\chi_\Omega}=I_\nu$ and the family $\mathcal{M}:=\{M_g: g\in B_{L_\infty(\nu)}\}$. 
Since $I_\nu$ is Dunford-Pettis, the set $\{\nu(A):A\in \Sigma\}$ is relatively norm compact (see \cite[Theorem~5.8]{cal-alt-6}, cf. \cite[Proposition~2.6]{rod23}).
Hence, $\mathcal{M}$ has the Rainwater property and condition~(a) of Corollary~\ref{cor:unconditional-basis} holds (apply Lemma~\ref{lem:compact}).
Condition~(d) holds because
$$
	\|I_\nu(f_n)\|_X\|f_n\|_{L_1(\nu)}^{-1} =\frac{\|\nu(A_n)\|}{\|\nu\|(A_n)} \geq \frac{\|\nu(A_n)\|}{\|\nu\|(C_n)}\stackrel{\eqref{eqn:semi}}{\geq} \frac{1}{\rho}
	\quad
	\text{for all $n\in \N$}.
$$
To check condition~(b), let $(a_n)_{n\in \N}$
be a sequence in~$\mathbb R$ such that $\sum_{n=1}^\infty a_n f_n$ is convergent in~$L_1(\nu)$ and  
let $(g_n)_{n\in \N}$ be a sequence in~$B_{L_\infty(\nu)}$. Since the $A_n$'s are pairwise disjoint, we can
find $g \in B_{L_\infty(\nu)}$ such that $g|_{A_n}=g_n|_{A_n}$ for every $n\in \N$. 
Since $(f_n)_{n\in \N}$ is an unconditional basic sequence, $\sum_{n=1}^\infty a_n f_n$ is unconditionally convergent.  
Then the series
$$
	\sum_{n=1}^\infty a_n M_{g_n}(f_n)=\sum_{n=1}^\infty a_n \int_\Omega f_n g_n \, d\nu =
	\sum_{n=1}^\infty a_n \int_\Omega f_n g \, d\nu=	
	\sum_{n=1}^\infty M_g (a_n f_n)
$$
is unconditionally convergent in~$X$ (because $M_g$ is an operator) and, therefore, it is weakly unconditionally Cauchy. 
So, condition~(b) of Corollary~\ref{cor:unconditional-basis} holds too.
From that result it follows that $(f_n)_{n\in \N}$ is an $\ell_1$-sequence.

Let $c>0$ such that 
$$
	\sum_{n=1}^N |a_n|  \leq c \left\|\sum_{n=1}^N a_n f_n\right\|_{L_1(\nu)}
$$
for every $N\in \N$ and for all $a_1,\dots,a_N\in \mathbb R$. The previous inequality applied to $a_n:=\|\nu\|(C_n)$ yields
\begin{multline*}
	\sum_{n=1}^N \|\nu(C_n)\|  \leq  \sum_{n=1}^N \|\nu\|(C_n)  \\ \leq c \left\|\sum_{n=1}^N \chi_{A_n} \right\|_{L_1(\nu)} \
	=  c\left\|\chi_{\bigcup_{n=1}^N A_n} \right\|_{L_1(\nu)} =
	c \left\|\nu\right\|\left(\bigcup_{n=1}^N A_n\right)  \leq  c \left\|\nu\right\|(\Omega)
\end{multline*}
for every $N\in \N$. It follows that $\sum_{n=1}^\infty \|\nu(C_n)\|\leq c\|\nu\|(\Omega)<\infty$, as required.
\end{proof}

\subsection*{Acknowledgements}
The research was supported by grants PID2021-122126NB-C32 
(funded by MCIN/AEI/10.13039/501100011033 and ``ERDF A way of making Europe'', EU) and 
21955/PI/22 (funded by {\em Fundaci\'on S\'eneca - ACyT Regi\'{o}n de Murcia}).


\bibliographystyle{amsplain}

\end{document}